\documentclass[12pt]{article}
\usepackage{amssymb,amsfonts,amsmath,amsthm,tikz}
\usepackage{xcolor}
\usepackage[utf8]{inputenc}
\usepackage{bm}
\usepackage{mathtools}
\usepackage{soul}
\usepackage{enumerate}
\usepackage[english]{babel}
\usepackage{amsthm}
\usepackage{faktor}
\usepackage{enumitem}
\usepackage{hyperref}
\usepackage{mathabx}
\usepackage{flexisym}
\newcommand{\Z}{\mathbb{Z}}
\newcommand{\Q}{\mathbb{Q}}
\newcommand{\R}{\mathbb{R}}
\newcommand{\N}{\mathbb{N}}

\newcommand{\J}{\mathbb{J}}

\newcommand{\0}{\mathcal{O}}
\newcommand{\LO}{\mathcal{LO}}
\newcommand{\Aut}{\mathrm{Aut}}
\newcommand{\BO}{\mathcal{BO}}

\newcommand{\h}{\mathrm{Homeo}_{+}{(\R)}}
\theoremstyle{definition}

\usepackage[lite]{amsrefs}

\theoremstyle{definition}

\newtheorem{theorem}{Theorem}
\newtheorem{lemma}[theorem]{Lemma}
\newtheorem{proposition}{Proposition}

\theoremstyle{remark}
\newtheorem{remark}{Remark}

\theoremstyle{plain}

\theoremstyle{definition}

\theoremstyle{remark}

\begin{document}

	\title{Free products of bi-orderable groups}
	
	\author{Kyrylo Muliarchyk}
	
	\date{}
	
	\maketitle
	
	\begin{abstract}
		We prove a bi-ordered version of Rivas' result for free products of left-order groups. Namely, we show that a free product of bi-ordered groups does not admit isolated bi-ordering. Our method relies on the dynamical realization of bi-ordered groups introduced in \cite{my1}. We also show that the natural action of the automorphism group $Aut(F_2)$ on $F_2$ does not have dense orbits.
	\end{abstract}
	
	\section{Introduction}
	
	\label{sec:1}
	A total relation $<$ on a group $G$ is said to be a \emph{left order} if it is invariant under left multiplication, that is, $x<y$ implies $zx<zy$ for all $x,y,z\in G$. Similarly, a total relation $<$ is a \emph{right order} if it is invariant under right multiplication. A relation that is simultaneously a left and a right order is called a \emph{bi-order}. 
	A group that admits a left order is called \emph{left-orderable}, and a group that admits a bi-order is called \emph{bi-orderable}.
	
	Elements that are bigger or smaller than the identity element of a group are called \textbf{positive} and \textbf{negative} respectively.
	The set $P_<$ of all positive elements of a group $G$ with order $<$ is called a \emph{positive cone}. The positive cone $P_<$ is usually identified with the ordering $<$.

	Given a group $G$, we denote by $\LO(G)$ (resp. $\BO(G)$) the set of all left-orderings (resp. bi-orderings) on $G$. 
	For a finite subset $F \subset G$, let $V_F$ (resp. $U_F$) be a set of all left-orders (resp. bi-orders) $<$ such that $F\subset P_<$. These sets form the base of the natural topology on $\LO(G)$ ($\BO(G)$) as first explored by Sikora in \cite{Sikora}.
	In the case of a countable group $G$, he proved that $\LO(G)$ is a compact totally disconnected Hausdorff metrizable space. Then $\BO(G)$ is a closed subset of $LO(G)$.
	
	The ordering $<$ is isolated in this topology, if there is a finite set $F\subset G$ such that $<$ is the only order with the property $F\subset P_<$. If a countable ordered group admits no isolated orderings then $\LO(G)$ ($\BO(G)$) is isomorphic to a Cantor set.
	
	Thus, asking whether a given group has an isolated ordering is a natural question in the theory of orderable groups. 
	In the case of free abelian groups $\Z^n$, $n\geq 2$, Sikora showed that $\Z^n$ does not admit isolated orderings.
	For free groups $F_n$, $n\geq 2$, McCleary proved is \cite{McCLeary} the absence of isolated left-orderings. A similar result was later shown for bi-orders in \cite{my1}. McCleary's result was generalized in \cite{Rivas} for free products of left-ordered groups.
	In this paper, we prove a bi-ordered version of this fact. 
	
	\begin{theorem}\label{freenoiso}
		Let $G$ and $H$ be two bi-orderable groups. Then the free product $G\bigast H$ has no isolated bi-orderings.
	\end{theorem}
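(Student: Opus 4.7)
The plan is to show that for any bi-ordering $<$ on $G\ast H$ and any finite set $F\subset P_{<}$, there exist bi-orderings $<'$ arbitrarily close to $<$ but distinct from it, all containing $F$ in their positive cone. Once this is established, no basic open set $U_{F}$ isolates $<$, and the theorem follows.

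First I would restrict $<$ to obtain bi-orderings $<_{G}$ and $<_{H}$ on the factors and invoke the bi-order dynamical realization of \cite{my1} to realize $G$ and $H$ as subgroups of $\hr$, in a way that recovers $<_{G}$ and $<_{H}$ from the orbit of a basepoint and carries the additional structure (invariance under an appropriate conjugation or translation) characterizing bi-orders. The next step is to glue these two realizations into an action of $G\ast H$ on $\R$: following the template of Rivas \cite{Rivas}, one picks a basepoint and, for each element of $G\ast H$ written in reduced normal form, alternately applies the realizations of $G$ and $H$. This produces a faithful action whose orbit order at the basepoint induces a bi-ordering of $G\ast H$ extending $<_{G}$ and $<_{H}$; by universality of the dynamical realization, the given $<$ can be recovered in this fashion.

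To produce distinct nearby bi-orderings I would conjugate the realization of $H$ by a translation $\tau_{s}\colon x\mapsto x+s$ for small $s\in\R$, keep the realization of $G$ fixed, and assemble the resulting pair into a new action of $G\ast H$ depending on $s$. For any fixed finite $F$, the signs of the finitely many elements of $F$ at the basepoint are determined by strict inequalities, so they persist under small perturbations of $s$; hence the induced bi-orderings lie in $U_{F}$. A separation argument — for instance, exhibiting a single element $w\in G\ast H$ whose sign changes as $s$ varies — then shows that infinitely many distinct bi-orderings are obtained, so $<$ cannot be isolated.

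The main obstacle is ensuring that the perturbed gluing yields a \emph{bi}-ordering rather than merely a left-ordering of $G\ast H$. The conjugation-invariance condition from \cite{my1} holds automatically for each factor in its canonical realization, but must be transferred across the ping-pong combination and preserved when one factor is conjugated by $\tau_{s}$. Verifying that this bi-order condition survives on the full free-product action — and simultaneously that the family $\{<_s\}$ remains generically non-constant — is the technical heart of the argument, and precisely the point at which the present theorem strictly extends Rivas' left-ordered version.
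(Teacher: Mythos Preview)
Your outline has the right shape---perturb one factor's realization inside $\h$ and argue that the finitely many sign constraints survive---but the central step does not go through as written. After restricting $<$ to $<_G$ and $<_H$ and building dynamical realizations of the two factors separately, you assert that gluing them ``by universality'' recovers the original $<$. This is false: a bi-order on $G\ast H$ is \emph{not} determined by its restrictions to the factors. Already for $G=H=\Z$ there are only four pairs $(<_G,<_H)$ but a Cantor set of bi-orders on $F_2$. So your perturbed orders $<_s$ all live near the particular order produced by your gluing, not near the given $<$, and you never approximate $<$ at all.

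The paper avoids this by taking the dynamical realization of the \emph{full} ordered group $(F,<)$ from the start, then merging it (via Proposition~\ref{prop} and Theorem~\ref{merging}) with a realization of $\Z=\langle\tau\rangle$ to obtain a bi-order on $F\ast\Z$. The perturbation is the automorphism of $F$ sending $g\mapsto g^{\tau}$, $h\mapsto h$, and the new order on $F$ is pulled back from the bi-order on $F\ast\Z$; bi-invariance is then automatic. Two further points you are missing: (i) the argument splits on whether the intersection $\Gamma=\bigcap_f (C_{f_1})^f$ is trivial---if not, one simply flips the order on $\Gamma$---and the dynamical perturbation is only run when $\Gamma=\{1\}$, a hypothesis used essentially to find conjugates $f_1^{f'}\ll f_0$; (ii) the perturbing homeomorphism $\tau$ is not a translation but is constructed to be the identity above a carefully chosen point $t'<t_0$, which is exactly what guarantees $\tau\ll f_0$ and hence that the signs of $f_1,\dots,f_n$ are unaffected.
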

	
	Our method relies on the dynamical realization of bi-ordered groups introduced in \cite{my1}. The standard dynamical realization of a left-ordered group $G$ associates the elements of $G$ with rational numbers and translates the left multiplication action of $G$ on itself to an action on $\Q$ which later extends to an action on $\R$. The alternative dynamical realization of a bi-ordered group $G$ associates convex jumps in $G$ with some disjoint intervals in $\R$ and translates the conjugation action of $G$ on the set of convex jumps to an action on the set of endpoints of those intervals. The action of an element $g\in G$ inside the interval $I$ is defined analogously to the action of a certain element $s_{g,I}$ in the dynamical realization of some left-ordered group $S$.
	The specific details of this construction and its relevant properties are given in Section \ref{sec:2}.
	To illustrate the strength of this dynamical realization, we reprove Vinogradov's theorem \cite{Vin}.
	\begin{theorem}
		The free product of two bi-ordered groups is bi-ordered.
	\end{theorem}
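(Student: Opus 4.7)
The plan is to realize the free product $G\bigast H$ as a group of orientation-preserving homeomorphisms of $\R$ that fits the framework of a bi-ordered dynamical realization described in Section \ref{sec:2}; once such a realization is built, a bi-order on $G\bigast H$ can be read off from its associated structure.

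Concretely, I would fix bi-orders on $G$ and $H$ and let $\rho_G, \rho_H$ be their dynamical realizations on $\R$, each equipped with its distinguished family of disjoint open intervals indexed by convex jumps. The realization of $G\bigast H$ would then be constructed by a nested alternating insertion: start with a countable collection of disjoint open intervals in $\R$ labelled by the left cosets of $G$ and $H$ in $G\bigast H$ (or equivalently by reduced words, grouped by their first syllable); inside each interval whose label ends in a $G$-syllable, glue in an affinely rescaled copy of the interval family coming from $\rho_H$, and symmetrically inside each $H$-labelled interval glue in a copy of the $\rho_G$ interval family; iterate. The universal property of the free product then yields a homomorphism $\rho: G\bigast H \to \hr$, in which the generators from $G$ act by $\rho_G$ on their attached pieces while rigidly shuffling the intervals of opposite type according to the coset action, and symmetrically for $H$.

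The next step is to verify that $\rho$ is faithful and that it satisfies the axioms of a bi-ordered dynamical realization from \cite{my1}. Faithfulness follows from a ping-pong argument on reduced words: an alternating product $a_1 b_1 \cdots a_k b_k$ with $a_i \in G$, $b_i \in H$ nontrivial acts nontrivially on any interval whose label is sufficiently deep, since each syllable displaces the pieces of the opposite type. The invariance of the distinguished interval family under $\rho(G\bigast H)$ is built into the construction, as is the prescribed local form of the action inside each interval, obtained directly from $\rho_G$ or $\rho_H$.

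The main obstacle is verifying that the global conjugation action of $G\bigast H$ permutes the interval family in the way required by the definition of a bi-ordered dynamical realization in \cite{my1}, and that the locally inserted left-ordered pictures remain coherent under this permutation. In particular, when an outer $G$-syllable acts on a deeply nested interval whose local picture was copied from $\rho_H$, one must check that the induced map on this picture matches the action of some element of the left-ordered companion group $S$ in a consistent way. This is the step where bi-invariance, rather than mere left-invariance, of the orders on $G$ and $H$ is essential: it is what guarantees that rearranging the inserted left-ordered pieces by an outer conjugation remains compatible with their internal positive cones.
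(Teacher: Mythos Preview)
Your proposal stops precisely at what you call the main obstacle---verifying that conjugation by an outer $G$-syllable acts coherently on the deeply nested $H$-pieces---and does not resolve it. There is also a structural mismatch: the interval family in a bi-ordered dynamical realization is indexed by the convex jumps of the order one is constructing, not by cosets or reduced-word prefixes, so your nested-insertion picture would require you to already know the convex-jump structure of the bi-order on $G\bigast H$ in order to set up the realization that is supposed to produce it.

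The paper's route is different and sidesteps this circularity. Instead of nesting rescaled copies of $\rho_G$ and $\rho_H$ inside one another, it lets $G$ and $H$ act on the \emph{same} copy of $\R$ via $\rho_G$ and $\rho_H$, and then invokes Proposition~\ref{prop} to replace $\rho_H$ by an arbitrarily $C^0$-small conjugate $\phi\rho_H\phi^{-1}$ so that the two realizations \emph{merge}: no $x$-reduced alternating word fixes a critical point unless it already lies in a single factor. Theorem~\ref{merging} then shows that the action of $G\bigast H$ generated by $\rho_G$ and $\phi\rho_H\phi^{-1}$ satisfies condition~\eqref{dynbi}. Since \eqref{dynbi} is manifestly conjugation-invariant (the sets $\{x:g(x)>x\}$ and $\{x:g(x)<x\}$ are carried by any $h\in\h$ to the corresponding sets for $g^{h}$), it directly defines a bi-order on $G\bigast H$ extending the given ones on $G$ and $H$; this is exactly the content of the Remark following Theorem~\ref{merging}. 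The point is that one never needs to identify the convex jumps of the resulting order or verify the full axiom list of a bi-ordered dynamical realization---condition~\eqref{dynbi} alone already encodes a bi-order.
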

	Of course, our method is limited to countable groups, however, one can easily extend to the general case using the compactness argument.

	In \cite{clay} Clay presented an alternative proof of the fact that the free group $F_2$ does not admit isolated left orders. He showed that there exists a dense orbit under the natural conjugation action of $F_2$ on $\LO(F_2)$. 
	Of course, $F_2$ acts trivially on $\BO(F_2)$ by conjugation; thus, this action does not have dense orbits. However, it is reasonable to ask whether there exists a dense order under the natural action of the automorphism group $\Aut_{F_2}$ on $\BO(F_2)$. For instance, under the automorphism action on $\BO(\Z^2)$, all orbits are dense.
 
	We give a negative answer to this question.
	\begin{theorem}\label{nodense}
		Let $F_2$ be a free group on two generators. Then the natural action of $\Aut_{F_2}$ on $\BO(F_2)$ has no dense orbits.
	\end{theorem}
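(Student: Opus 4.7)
The plan is to first reduce the $\Aut(F_2)$-action to the action of $\mathrm{Out}(F_2) \cong GL_2(\Z)$. Since in a bi-ordering the positive cone is a union of conjugacy classes, inner automorphisms act trivially on $\BO(F_2)$, so it suffices to show no $GL_2(\Z)$-orbit is dense. I would then try to work with the equivariant projection $\pi\colon \BO(F_2) \to \BO(\Z^2)$ induced by abelianization; this is well-defined on the $\Aut$-invariant closed subset of bi-orderings for which $[F_2,F_2]$ is convex, and bi-orderings outside this subset have orbits confined to its complement and are automatically non-dense in $\BO(F_2)$.

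For bi-orderings $<$ with $[F_2,F_2]$ and $\gamma_3 = [[F_2,F_2],F_2]$ convex, I would pass to the next graded piece and obtain a further equivariant projection $\pi_3\colon \BO(F_2) \to \BO(\gamma_3/\gamma_4) \cong \BO(\Z^2)$, but now with $GL_2(\Z)$ acting via the twisted representation $M \mapsto \det(M)\cdot M$ coming from the natural action on the degree-three piece of the free Lie algebra on two generators. For each such $<$ the orbit is then controlled by the diagonal twisted action on the pair $(\tau_1,\tau_3) \in \BO(\Z^2)\times \BO(\Z^2)$. I would construct a finite set $F \subset F_2$ combining standard generators with elements of $\gamma_3$, such that positivity of $F$ in $<_\phi$ translates into the requirement that both columns of $\bar\phi \in GL_2(\Z)$ lie simultaneously in the positive cone of $\tau_1(<)$ and in the positive cone of $\det(\bar\phi)\cdot \tau_3(<)$. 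With a careful choice of $F$, this combined cone can be made to contain no $\Z$-basis of $\Z^2$, so no automorphism places $<_\phi$ into the basic open set $V_F$; the nonemptiness of $V_F$ is witnessed by bi-orderings built via the dynamical realization machinery of Section~\ref{sec:2}.

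The main obstacle is making this argument uniform in $<$: when $\tau_1(<)$ and $\tau_3(<)$ happen to align, the cone-intersection at the $\gamma_3/\gamma_4$ level may fail to exclude all bases. To handle such cases I expect one must iterate up the lower central series, using that each graded piece $\gamma_k/\gamma_{k+1}$ carries a distinct polynomial $GL_2(\Z)$-representation, so that accumulating compatibility conditions from sufficiently many layers should eventually pinch the admissible cone down to contain no basis, regardless of $<$. The dynamical realization from Section~\ref{sec:2} should make this bookkeeping tractable and should also provide the explicit bi-orderings needed to witness $V_F \neq \emptyset$.
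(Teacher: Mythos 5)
Your opening reduction to $\mathrm{Out}(F_2)\cong GL_2(\Z)$ is fine (inner automorphisms indeed act trivially on bi-orders), but the very next step contains a genuine logical gap: you dismiss the orders in which $[F_2,F_2]$ is \emph{not} convex by saying their orbits are "confined to the complement" of the closed invariant set $C$ of orders with $[F_2,F_2]$ convex and are "automatically non-dense." Confinement to the open set $\BO(F_2)\setminus C$ rules out density only if $C$ has nonempty interior, i.e.\ only if some basic open set $V_F$ (finitely many inequalities) \emph{forces} convexity of the commutator subgroup. You do not prove this, and it is far from clear it is true: convexity of an infinite-rank, infinite-index subgroup is an infinite conjunction of conditions, and nothing in the sketch shows it can be pinned down by finitely many sign conditions. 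So an entire class of orders is simply not handled. The same issue recurs when you further restrict to orders with $\gamma_3$ convex.

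The core of the argument is also not carried out: the existence of a finite set $F$ whose positivity forces both columns of $\bar\phi$ into a cone containing no $\Z$-basis is asserted, not constructed, and you yourself identify the fatal non-uniformity in $<$ (when $\tau_1(<)$ and $\tau_3(<)$ "align") and only \emph{expect} that iterating up the lower central series "should eventually pinch" the cone. That is a conjecture, not a proof; note also that since all $GL_2(\Z)$-orbits on $\BO(\Z^2)$ are dense, everything must come from the deeper layers, which is exactly where your argument is incomplete. The paper proceeds quite differently and much more economically: to an order with positive generators $a,b$ satisfying $C_a=D_b$ it attaches an $\Aut(F_2)$-invariant real number, the \emph{type} $\alpha$, namely the factor by which conjugation by $a$ scales the Conrad homomorphism of the jump $C_b<D_b$; a lemma shows the type is independent of the chosen generating pair, the finitely many inequalities $1<b<a$, $(b^a)^n<b^m$, $b^l<(b^a)^k$ cut out $\Aut$-invariant open sets $U_{k,l,m,n}$ that separate orders of different types, and orders of every type $\alpha>1$ are exhibited explicitly. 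Hence every orbit closure meets orders of at most one type and therefore misses a nonempty invariant open set, which gives Theorem~\ref{nodense} and in fact the stronger Theorem~\ref{verynodense}. Your underlying idea of separating orbits by an invariant extracted from a graded/Conradian structure is in the right spirit, but the invariant, the separating open sets, and the treatment of orders lacking the required convexity are precisely the missing content.
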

	
	Moreover, we prove a more general result.
	
	\begin{theorem}\label{verynodense}
		The set $\BO(F_2)$ cannot be presented in the form
		\[
		\BO(F_2)=\bigcup_{n\in \N} \overline{\Aut_{F_2}(<_n)}
		\]
		where $\overline{\Aut_{F_2}(<_n)}$ denotes the closure of the orbit of order $<_n\in \BO(F_2)$.
	\end{theorem}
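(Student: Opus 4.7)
My plan is to apply the Baire category theorem. Since $F_2$ admits no isolated bi-orderings by \cite{my1}, $\BO(F_2)$ is homeomorphic to a Cantor set, and in particular is a compact metrizable Baire space. If one had a decomposition $\BO(F_2)=\bigcup_{n\in\N}\overline{\Aut_{F_2}(<_n)}$ into countably many closed sets, Baire's theorem would force some $\overline{\Aut_{F_2}(<_n)}$ to have nonempty interior. Thus the theorem reduces to the following strengthening of Theorem~\ref{nodense}: \emph{for every bi-order $<$ on $F_2$, the orbit closure $\overline{\Aut_{F_2}(<)}$ has empty interior.}

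Unpacking this reduction, the concrete task becomes: given a bi-order $<$ and any nonempty basic open set $U_F$, produce a bi-order $<' \in U_F$ together with finitely many extra elements $g_1,\dots,g_k \in F_2$ whose prescribed signs in $<'$ are incompatible with every automorphic image of $<$ that keeps $F$ positive. To build such a $<'$ I would use the dynamical realization of bi-ordered groups from Section~\ref{sec:2}, which encodes $<$ by disjoint intervals in $\R$ indexed by convex jumps, together with the conjugation dynamics on the endpoints. Since $F$ is finite, membership in $U_F$ constrains only finitely many features of the realization, leaving freedom to modify the action on or insert new jumps inside intervals not already pinned down by the positivity of $F$. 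The aim is to perform such a modification so that $<'$ carries a detectable invariant distinguishing it from every $\varphi(<)$ with $\varphi \in \Aut_{F_2}$.

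The heart of the argument, and the main obstacle, is isolating an invariant of bi-orders on $F_2$ that is preserved under the $\Aut_{F_2}$-action yet sensitive to the perturbations available inside an arbitrary $U_F$. A natural first candidate is coarse data extracted from the induced structure on the abelianization $\Z^2$, on which $\Aut_{F_2}$ acts through its natural $\mathrm{GL}_2(\Z)$-quotient; but because a bi-order on $F_2$ need not descend to the abelianization, this must be refined, for instance by tracking how the abelianization map interacts with the chain of convex subgroups of $<$, or by extracting a topological invariant of the $\R$-action provided by the dynamical realization that is continuous in the bi-order. Promoting the invariant used in the proof of Theorem~\ref{nodense} from a \emph{global} obstruction (ruling out a dense orbit) to a \emph{local} one (present inside every basic open $U_F$) is where I expect the bulk of the technical work to lie.
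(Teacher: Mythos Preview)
Your Baire-category reduction is sound, but what you have written is not yet a proof: you explicitly stop at the point where the real content lies, namely the identification of an $\Aut_{F_2}$-invariant that can be detected inside every basic open set. Everything hinges on that invariant, and you have only gestured at candidates (abelianization data, features of the dynamical realization) without committing to one or verifying its properties.

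The paper's argument is both more concrete and structurally different from your outline. It does not use Baire at all. Instead it introduces a real-valued invariant, the \emph{type}: for a bi-order with generators $a>b>1$ satisfying $C_a=D_b$, the type is the multiplier $\alpha>0$ by which conjugation by $a$ acts on the Conrad quotient $D_b/C_b\hookrightarrow\R$. A short lemma shows this $\alpha$ is independent of the choice of generating pair, hence $\Aut_{F_2}$-invariant. For rationals $k/l<m/n$ the basic open set $V_{k,l,m,n}=\{\,< :\ 1<b<a,\ (b^a)^n<b^m,\ b^l<(b^a)^k\,\}$ forces the type into $[k/l,m/n]$, and its $\Aut_{F_2}$-saturation $U_{k,l,m,n}$ is an open invariant set. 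Disjoint rational windows give disjoint $U$'s, so any single orbit closure can meet at most one such window; in particular it can contain orders of at most one type. Since one can explicitly construct an order of type $\alpha$ for every $\alpha>1$, there are uncountably many types, and no countable union of orbit closures can cover $\BO(F_2)$.

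Note also that your reduction asks for strictly more than the paper proves: to show every orbit closure has empty interior via the type invariant you would need typed orders (indeed, orders of at least two distinct types) inside every nonempty open $U_F$, i.e.\ a density statement the paper neither states nor needs. The uncountability route sidesteps this entirely.
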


	\section{Dynamical Realization}
	
	\label{sec:2}
	
	The connection between left-orderability and the group's dynamical properties is a widely recognized fact. An illustration of this connection can be found in the theorem below, as presented in \cite[Theorem 3.4.1]{KopMed}.
	\begin{theorem}\label{dynrel}
		A group $G$ is left-orderable if and only if it acts faithfully on some totally ordered set $X$ by order-preserving homeomorphisms. 
		For countable $G$ one can take $X=\Q$ or $X=\R$.
	\end{theorem}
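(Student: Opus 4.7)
The plan is to prove the equivalence by showing both directions, then to upgrade the resulting action to one on $\Q$ or $\R$ in the countable case.

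For the easy direction, if $G$ admits a left order $<$, then left multiplication $L_g(h) = gh$ is order-preserving by the definition of a left order, and the assignment $g \mapsto L_g$ embeds $G$ into the order automorphism group of $(G,<)$. This embedding is injective because $L_g = \mathrm{id}$ forces $g = g \cdot e = e$. So $G$ acts faithfully on the totally ordered set $(G,<)$, and any order bijection of a chain is automatically a homeomorphism for the order topology.

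For the converse, suppose $G$ acts faithfully on $(X,<_X)$ by order-preserving bijections. I would well-order $X$ as $\{x_\alpha\}_{\alpha < \kappa}$. For each nontrivial $g \in G$, faithfulness produces a least ordinal $\alpha(g)$ with $g \cdot x_{\alpha(g)} \ne x_{\alpha(g)}$; declare $g \in P$ iff $g \cdot x_{\alpha(g)} >_X x_{\alpha(g)}$. I would then verify the three positive-cone axioms: $P \cap P^{-1} = \emptyset$ by trichotomy, $P \cup P^{-1} = G \setminus \{e\}$ by faithfulness, and $P \cdot P \subset P$ by a short case analysis on $\min(\alpha(f),\alpha(g))$ for $f,g \in P$, using the fact that order-preserving bijections fix the initial segment of common fixed points.

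For the countable refinement, the naive idea of embedding $(G,<)$ into $\Q$ by Cantor's theorem does not immediately give an action of $G$ on $\Q$, since a left translation a priori only moves the image of $G$. The standard fix is a thickening trick: set $Y = G \times \Q$ with the lexicographic order. This is a countable dense linear order without endpoints, hence order-isomorphic to $\Q$ by Cantor's back-and-forth theorem. The action $g \cdot (h,q) = (gh,q)$ is order-preserving and faithful on $Y$, and transporting it through the isomorphism $Y \cong \Q$ produces the desired faithful, order-preserving action on $\Q$, which is automatically by homeomorphisms for the order topology. To reach $\R$, each element of $G$ acts on $\Q$ by an order-preserving bijection, and such a map extends uniquely by density to an order-preserving homeomorphism of $\R$; the extension is a group homomorphism and is still faithful because it already is on $\Q$.

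The main obstacle is the last paragraph: converting the action on the abstract chain $(G,<)$ into an action on $\Q$ without sacrificing equivariance or faithfulness. Once the $G \times \Q$ thickening is adopted, this reduces to standard back-and-forth and density-of-$\Q$-in-$\R$ arguments, and the rest of the proof is purely bookkeeping.
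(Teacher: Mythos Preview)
The paper does not actually prove this theorem; it is quoted as \cite[Theorem 3.4.1]{KopMed} and used as background. So there is no in-paper argument to compare against.

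Your proof is correct and is the standard one. The left-multiplication action on $(G,<)$ handles the forward direction; the well-ordering/least-moved-point construction of a positive cone handles the converse (your case analysis on $\min(\alpha(f),\alpha(g))$ works because an order-preserving bijection is strictly monotone, so $g\cdot x_\alpha>_X x_\alpha$ implies $f(g\cdot x_\alpha)>_X f(x_\alpha)\geq_X x_\alpha$); and the $G\times\Q$ thickening followed by Cantor's back-and-forth is exactly the right way to land on $\Q$ equivariantly, after which density pushes the action to $\R$. One small remark: you should note explicitly that $G\times\Q$ is dense even across distinct $G$-fibres, which holds because for $g<g'$ the point $(g,q+1)$ lies strictly between $(g,q)$ and $(g',q')$ regardless of whether there is an element of $G$ strictly between $g$ and $g'$.
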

	
	The action constructed to prove Theorem \ref{dynrel} is called the \emph{dynamical realization} of the (left-ordered) group $G$.

	A similar result holds for bi-orderable groups.
	
	\begin{theorem}\label{dynre}
		A countable bi-ordered group $G$ admits a faithful action by orientation-preserving homeomorphisms of the real line. Moreover, the action can be chosen to satisfy the conditions 
		\begin{equation}\label{dynbi}
			\sup\{x\in \R\mid g(x)>x\}>\sup\{x\in\R \mid g(x)<x\}, \quad g>1,
		\end{equation}
		and
		\begin{equation}\label{dynnol}
			\sup\{x\in \R\mid g(x)\neq x\}>\sup\{x\in\R \mid h(x)\neq x\}, \quad g\gg h.
		\end{equation}
	\end{theorem}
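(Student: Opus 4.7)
The plan is to build the action piece by piece from the convex-subgroup structure of the bi-order. First I would enumerate the countable collection $\{(C_\alpha, C'_\alpha)\}$ of convex jumps of $G$, that is, pairs of convex subgroups with no convex subgroup strictly between them. There is a natural action of $G$ on a suitable indexing set of jumps (induced by conjugation together with appropriate coset data, as sketched in the introduction) that will drive the dynamics on the endpoints of the intervals to be constructed, while the quotients $C'_\alpha/C_\alpha$ and/or auxiliary left-ordered groups provide the dynamics inside the intervals.

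Next I would assign to each jump a disjoint open interval $I_\alpha \subset \R$, arranged so that: (a) intervals corresponding to archimedeanly smaller jumps (in the $\gg$ sense) lie to the left of intervals corresponding to larger ones; and (b) intervals in the same $G$-orbit are placed so that the induced $G$-action on their labels is realizable by an orientation-preserving permutation of their endpoints. The rightward leaning in (a) is precisely what will deliver condition (\ref{dynnol}). The action of $g \in G$ on $\R$ is then defined as follows: on the complement of $\bigsqcup_\alpha I_\alpha$, $g$ acts by the prescribed endpoint permutation, extended continuously across the remaining countable set of gaps; inside each $I_\alpha$ associated with the jump $(C_\alpha, C'_\alpha)$, $g$ acts by an orientation-preserving homeomorphism modeled on the action of a suitable element $s_{g, I_\alpha}$ in the dynamical realization of an auxiliary left-ordered group $S$ attached to that jump, as outlined in Section \ref{sec:1}.

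The main obstacle will be to verify that this piecewise recipe is consistent, namely that the inside and outside pieces glue to an actual homeomorphism of $\R$ and that composition realizes multiplication in $G$. This requires careful bookkeeping of how the assignments $g \mapsto s_{g, I_\alpha}$ transform under conjugation when $g$ moves the jump $(C_\alpha, C'_\alpha)$ to a different jump, and how the continuous extensions at the endpoints remain compatible. Faithfulness is comparatively easy: any non-identity $g$ determines a maximal convex subgroup not containing it, producing a jump on which $g$ is non-trivial by construction. Condition (\ref{dynbi}) reduces to the corresponding property for the dynamical realizations of left-ordered groups applied to each $s_{g, I}$, where positive elements eventually push points to the right; and condition (\ref{dynnol}) follows directly from the placement prescribed in (a), since the support of a $\gg$-dominant element extends into intervals positioned further to the right than those of dominated elements.
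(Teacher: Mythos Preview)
Your plan is essentially the paper's approach: convex jumps, the conjugation action of $G$ on them, auxiliary elements $s_{g,J}$ living in a single left-ordered free product $S$, and the two conditions read off from the lexicographic structure. Where you diverge is in the execution, and this is exactly where you flag ``the main obstacle.'' Placing disjoint intervals $I_\alpha$ in $\R$, prescribing an endpoint permutation on the complement, and then hoping to ``extend continuously across the remaining countable set of gaps'' is genuinely delicate: the complement of $\bigsqcup_\alpha I_\alpha$ need not be countable, the endpoint data alone does not determine a homeomorphism there, and checking that the inside and outside pieces glue to a group action is precisely the bookkeeping you do not carry out.

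The paper sidesteps all of this. Instead of embedding intervals into $\R$ and gluing, it works directly with the countable ordered set $\J\times S$ equipped with the lexicographic order, defines the $G$-action there by the single formula $g(J,s)=(J^g,\,s_{g,J}\cdot s)$, verifies it is an order-preserving group action (immediate from the cocycle identity $s_{g,J^h}s_{h,J}=s_{gh,J}$), and only then invokes Cantor's isomorphism theorem to transport everything to $\Q$ and extend to $\R$. No intervals are ever placed by hand, no complement needs to be handled, and no gluing is required; the ``intervals'' you envision are simply the convex blocks $\{J\}\times S$, already sitting correctly inside the model. Your conditions (a) and (b) become automatic consequences of the lexicographic order, and conditions \eqref{dynbi} and \eqref{dynnol} fall out exactly as you predict. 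So the missing idea in your sketch is not a new ingredient but a reorganization: build the totally ordered $G$-set first, and let Cantor do the embedding for you.
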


	\begin{remark}
		The standard dynamical realization of $G$ as a left-ordered group satisfies \eqref{dynbi} but does not satisfy \eqref{dynnol}.
	\end{remark}
	
	The proof of Theorem \ref{dynre} can be found in \cite{my1}. However, the proof there relies on the bi-orderability of free products. Since we aim to reprove this fact, we present an alternative proof of Theorem \ref{dynre} in this paper.

	\begin{proof}[Proof of Theorem \ref{dynre}]
		
		Let $G$ be a countable bi-ordered group, and let $\J$ be the set of convex jumps in $G$. The group $G$ acts on $J$ by conjugations splitting it into orbits
		\[
		\J=\bigcup \0_i.
		\]
		For each orbit we fix a representative $J_i=(C_i<D_i)\in \0_i$, and associate it with a group $H_i\coloneqq  ^{N_i}\!/\!_{C_i}$, where $N_i=N_{C_i}=N_{D_i}$ is the normalizer of the jump $J_i$ in $G$. For each $J\in\0_i$ we also choose an element $g_J\in G$ such that $J^{g_J}=J_i$. Then, for $g\in G$ and $J\in\J$ we denote 
		\[
		h_{g,J}\coloneqq  \left( g_{J^g} \cdot g \cdot g^{-1}_J\right) C_i \in H_i.
		\]
		As in \cite{my1} we let 
		\[
		S=\left(\Asterisk_i H_i \right)\Asterisk F_{\J},
		\]
		where $F_{\J}=\langle f_J|J\in\J \rangle$ is a free group whose generators are indexed with convex jumps of $G$.
		Then $S$ is a free product of bi-ordered groups. So, it is left-ordered using the argument \cite{DZtrees}, and the order on $S$ extends the orders on its factors. Moreover, for each positive $h\in H_i$, and any $s\in S$ we have $h^s>0$ (the order on $S$, however, does not need to be bi-invariant).
  
		We set
		\begin{equation}\label{s}
			s_{g,J}\coloneqq  f_{J^g}\left( h_{g,J} J_i\right) f^{-1}_{J},\quad (g,J)\in G\times \0_i
		\end{equation}
		and observe that they satisfy the conditions
		\begin{enumerate}
			\item \label{c1} $s_{g,J}=1$ if and only if $g\in C$, where $J=(C<D)$;
			\item \label{c2} $s_{g,J_g}>1$ for all $g>1$ and $s_{g,J_g}<1$ for all $g<1$;
			\item \label{c3}  $s_{g,J^h}\cdot s_{h,J}=s_{gh,J}$, for all $g,h\in G$, $J\in\J$.
		\end{enumerate}
		
		Next, we define the action of $G$ on $\J\times S$ by
		\begin{equation}\label{act}
			g(J,s)=(J^g, s_{g,J}\cdot s).    
		\end{equation}
		
		Note that condition \ref{c3} implies that \eqref{act} defines a group action. 
		
		We equip $\J\times S$ with lexicographic order so that it is countable dense and unbounded. Then the action given by \eqref{act} preserves this order because the conjugation action on $\J$ and the left multiplication action on $S$ are order-preserving.
		
		Consider some $g\in G\setminus \{1\}$. Then 
		\[
		g(J,s)=(J,s)
		\]
		if and only if $J>J_g$, and, if $g>0$
		\[
		g(J_g,s)=(J_g,s_{g,J_g}s)>(J,s)
		\]
		for all $s\in S$.
		
		By Cantor's isomorphism theorem, there is an order-preserving bijection $\sigma:\J\times S\to \Q$. We build the action on $\Q$ by the rule
		\[
		g(\sigma(J,s))= \sigma (g(J,s)).
		\]
		
		This action extends continuously to the action on $\R$, which satisfies conditions \eqref{dynbi} and \eqref{dynnol}.
	\end{proof}

	In the future, we will call the \emph{dynamical realization} of a countable bi-ordered group $G$ any its action on $\R$ that satisfies \eqref{dynbi}, \eqref{dynnol}, and such that the action of any $g\neq 1$ is non-trivial on any interval $(a,b)$ with $b<r_g\coloneqq \sup\{x\in\R\mid g(x)\neq x \}$.
	We call this point $r_g$ the \emph{critical point} for the element $g\in G$ and denote the set of all critical points $T_G\coloneqq \{t_g\mid g\in G\setminus\{1\}\}$. It is possible to have $t_g=+\infty$ for some $g\in G$. To avoid this, we pass to the dynamical realization of $\Z\times G$ with lexicographical order.
	We associate the dynamical realization of $G$ with the embedding $\rho:G\to \h$.

	Let the dynamical realizations $\rho_G$ and $\rho_H$ of bi-ordered groups $G$ and $H$ be given. The actions of $G$ and $H$ generate the action of $F=G\ast H$ on $\R$.
	
	Let $f=f_n \dots f_2 f_1$ be an element of $F$ written in the normal form, i.e., each $f_i$ is a non-trivial element of either $G$ or $H$, and the consecutive elements are from different groups. 
	For $x\in \R$ we say $f$ is $x$-reduced if 
	\[
	t_{f_k}\geq f_{k-1}\dots f_2 f_1 (x), \; 1\leq k\leq n.
	\]
	If $f$ is not $x$-reduced, we delete all letters $f_k$ with $t_{f_k}< f_{k-1}\dots f_1 (x)$, put the resulting word in the normal form, and repeat this process until it terminates at some $x$-reduced word. We call this word the $x$-reduced version of $f$ and denote it by $f_x$. 
	Note that $f$ and $f_x$ acts identically on $[x,\infty)$ and have the same left germs at $x$.

	We say that the dynamical realizations $\rho_G$ and $\rho_H$ of $G$ and $H$ respectively \emph{merge} if for any $x\in T_G \bigcap T_H$, and any $x$-reduced $f\in F=G\ast H$, $f(x)=x$ is possible only when $x\in T_G$ and $f\in G$, or $x\in T_H$ and $f\in H$.

	\begin{proposition}\label{prop}
		Let the dynamical realizations $\rho_G:G\to \h$ and $\rho_H:H\to \h$ be given. Then there is a homeomorphism $\phi\in\h$ such that $\rho_G$ and $\rho^{\prime}_H=\phi\circ \rho_H\circ\phi^{-1}$ merge.
		Moreover, for any $\epsilon>0$ one can take $\phi$ to satisfy
		\[
		\|\phi\|\coloneqq\sup\{|\phi(x)-x|, x\in \R\}<\epsilon.
		\]
	\end{proposition}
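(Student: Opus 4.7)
The plan is to observe that the merging condition becomes vacuous once $\phi(T_H) \cap T_G = \emptyset$, and to achieve this by a small perturbation; since both critical-point sets are countable subsets of $\R$, such a perturbation is easy to construct.

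The first step is to understand how critical points transform under conjugation by $\phi$. Since $\phi \in \h$ is order-preserving and $\phi \rho_H(h) \phi^{-1}(x) = x$ if and only if $\rho_H(h)(\phi^{-1}(x)) = \phi^{-1}(x)$, for every $h \in H\setminus\{1\}$ we have $r_{\rho^{\prime}_H(h)} = \phi(r_{\rho_H(h)})$. Consequently the critical-point set associated with $\rho^{\prime}_H$ is precisely $\phi(T_H)$, and the merging condition for the pair $(\rho_G, \rho^{\prime}_H)$ only constrains points of $T_G \cap \phi(T_H)$. If this intersection is empty, the condition holds vacuously.

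It therefore suffices to produce $\phi \in \h$ with $\|\phi\| < \epsilon$ and $\phi(T_H) \cap T_G = \emptyset$. Enumerate $T_H = \{y_1, y_2, \ldots\}$ and define $\phi$ on $T_H$ by induction: at stage $n$, pick $\phi(y_n) \in (y_n - \epsilon/2, y_n + \epsilon/2) \setminus T_G$ in a way that keeps the partial assignment order-preserving. This is possible because $T_G$ is countable and only finitely many order constraints are active at each step. Extend $\phi$ to $\R$ by piecewise-linear interpolation between the chosen points, with slopes uniformly bounded away from $0$ and $\infty$, and set $\phi$ equal to the identity outside a sufficiently large bounded interval. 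Alternatively, inside the open set $\{\phi \in \h : \|\phi\| < \epsilon\}$, each of the countably many conditions $\phi(y) \neq t$, $(y,t) \in T_H \times T_G$, is open and dense, so Baire category provides the required $\phi$. The only technical point is to check that the resulting map is a genuine homeomorphism of $\R$ (continuous, order-preserving bijection), which is routine with either construction; given such $\phi$ we have $T_G \cap \phi(T_H) = \emptyset$, and the merging condition for $(\rho_G, \rho^{\prime}_H)$ is satisfied vacuously.
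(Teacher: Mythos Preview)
Your argument rests on reading the merging definition as a condition only on points $x \in T_G \cap T_H$, so that arranging $\phi(T_H)\cap T_G=\emptyset$ makes it vacuous. But the $\bigcap$ in that definition is a typo for $\bigcup$: the paper's own proof of this proposition enumerates pairs $(f,f_0)$ with $f_0\in G\cup H\setminus\{1\}$, and the proof of the subsequent theorem applies the merging hypothesis at points of $T_G$ (respectively $T_H$) alone, \emph{after} first deducing $T_G\cap T_H=\emptyset$. With the intended reading, the requirement is: for every $x\in T_G\cup T_H$ and every $x$-reduced $f\in F$ with $f(x)=x$, one has $f\in G$ when $x\in T_G$ and $f\in H$ when $x\in T_H$. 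Disjointness of $T_G$ and $\phi(T_H)$ does not make this vacuous.

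Concretely, even after separating the critical sets, take $t=t_g\in T_G$ and some $h\in H$ with $t_{\rho'_H(h)}>t$. The word $h^{-1}gh$ is $t$-reduced, and whether $h^{-1}gh(t)=t$ depends on the full action of $\rho'_H(h)$ on a neighbourhood of $t$, not merely on where $\phi$ sends the points of $T_H$. So a perturbation that only controls the images $\phi(y)$ for $y\in T_H$ cannot certify merging. The paper instead lists the countably many pairs $(f,t)$ and builds $\phi=\phi_1\phi_2\cdots$, where each $\phi_k$ is a small correction making the $k$-th pair good; continuity then guarantees that each already-secured pair stays good provided the tail $\phi_{k+1}\phi_{k+2}\cdots$ is small enough. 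Your Baire-category instinct points in the right direction, but the relevant open-dense conditions on $\phi$ are ``the pair $(f,t)$ is good'', not ``$\phi(y)\neq t$''.
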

	
	\begin{proof}
		Order all pairs $(f,f_0)$ with $f\in F$ and $f_0\in G\bigcup H\setminus\{1\}$ such that the pair $(f^{\prime},f_0)$ comes before $(f,f_0)$ if the word $f^{\prime}$ can be obtained from $f$ by canceling some of its letter. We denote the $k$th pair $(f^{(k)},f_0^{(k)})$ and let $t_k=t_{f_0^{(k)}}$ be the critical point for $f_0^{(k)}$. If $f_0^{(k)}\in H$, we take $t_k$ related to the initial dynamical realization $\rho_H$. Let also $t_k^{\prime}=t_k$, when $f_0^{(k)}\in G$ and $t_k^{\prime}=\phi(t_k)$, when $f_0^{(k)}\in H$.

		We will build the homeomorphism $\phi$ recursively in the form
		\[
		\phi=\phi_1 \phi_2 \dots,
		\]
		where $\phi_k$ ensures that the pair $(f^{(k)},t^{\prime}_k)$ satisfies the condition in the definition of merging. In this case, we call the pair $(f^{(k)},t^{\prime}_k)$ good.
		
		We will use the notation $\widetilde{t}_k=t_k$ when $f_0^{(k)}\in G$ and $\widetilde{t}_k=\phi_1\dots\phi_k(t_k)$ when $f_0^{(k)}\in H$.
		
		We will require $\|\phi_k\|<\epsilon/2^k$ to guarantee $\|\phi\|<\epsilon$.
		
		We begin the recursion by choosing an arbitrary $\phi_1$ that turns $(f^{(1)},\tilde{t}_1)$ into a good pair and satisfies $\|\phi_1\|<\epsilon/2$.
		Due to continuity, $f^{(1)}_{t^{\prime}_1}$ remains the same word, and the pair $(f^{(1)},t^{\prime}_1$ remains good if the tail $\phi_2 \phi_3 \dots$ is small enough. Namely, $\|\phi_2 \phi_3 \dots\|<\epsilon_1$ for some $\epsilon_1>0$. We achieve this by requiring $\|\phi_k\|<\epsilon_1/2^{k-1}$ for $k>1$.

		Similarly, in the $k$-th step, we choose the homeomorpism $\phi_k$ that turns $(f^{(k)},\widetilde{t}_k)$ into a good pair and satisfies $\|\phi_k\|<\min\{\epsilon/2^k,\epsilon_1/2^{k-1},\dots,\epsilon_{k-1}/2\}$. Again, due to continuity, $f^{(k)}_{t^{\prime}_k}$ remains the same word, and the pair $(f^{(k)},t^{\prime}_k$ remains good if the tail $\|\phi_{k+1} \phi_{k+2} \dots\|<\epsilon_k$ for some $\epsilon_k>0$. So we require $\|\phi_m\|<\epsilon_1/2^{m-k}$ for $m>k$ in addition to the already existing constraints.
		
		Clearly, the process described above produces $\phi$ as desired.
		
	\end{proof}

	\begin{theorem}\label{merging}
		Assume that the dynamical realizations $\rho_G:G\to \h$ and $\rho_H:H\to \h$ merge. Then the generated action $\rho_F$ satisfies the condition \eqref{dynbi}. Furthermore, the associated order on $F$ extends the orders on $G$ and $H$.
	\end{theorem}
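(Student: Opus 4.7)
The plan is to derive the property \eqref{dynbi} for the generated action $\rho_F$ by pinning down the behaviour of an arbitrary $f\in F\setminus\{1\}$ on a small left neighbourhood of its critical point
\[
t_f := \sup\{x\in\R \mid f(x)\neq x\},
\]
and then recognising $f$ there as the action of a single letter via the merging hypothesis.

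First I fix $f=f_n\cdots f_1$ in normal form. Since each letter $f_k$ fixes everything above its own finite critical point $t_{f_k}$, the quantity $t_f$ is finite. Next I claim that for $x$ in a sufficiently small interval $(t_f-\delta,t_f)$ the $x$-reduced word $f_x$ is constant, equal to some fixed word $g$. Indeed, the reduction procedure depends only on the finite set of inequalities $t_{f_k} \geq f_{k-1}\cdots f_1(x)$, each of which is governed by a continuous function of $x$; so away from the finite set of $x$-values where equality occurs, the reduction is locally constant. Letting $x\to t_f^{-}$ and using continuity, $g$ is also $t_f$-reduced and satisfies $g(t_f)=t_f$; moreover $g$ is non-trivial, because $f\equiv g$ on $(t_f-\delta,t_f)$ while $t_f$ is a limit of points on which $f$ does not act trivially.

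I would then invoke the merging hypothesis at $t_f$: among non-trivial, $t_f$-reduced words in $F$ that fix $t_f$, only single letters of $G$ or $H$ survive. Hence $g$ is a single letter, say $g=g_1\in G$ with $t_{g_1}=t_f$. The property \eqref{dynbi} for $\rho_G$ now tells us that $g_1(x)-x$ has constant sign just to the left of $t_f$, and since $f\equiv g_1$ on this neighbourhood, the same is true for $f$. This establishes \eqref{dynbi} for $\rho_F$.

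For the second assertion I would define the bi-order on $F$ induced by $\rho_F$ by declaring $f>1$ iff $f(x)>x$ for all $x$ sufficiently close to $t_f^{-}$. For $h\in G\subset F$ the above analysis collapses to $g=h$ and $t_f=t_h$, so the induced sign near $t_h$ is read off from $\rho_G$ alone; the $F$-order therefore restricts to the given bi-order on $G$, and symmetrically for $H$. The main technical obstacle I anticipate lies in the previous step: one has to verify that the point $t_f$ actually belongs to $T_G\cap T_H$ so that the merging hypothesis can be applied without slippage. I expect this to follow from the observation that $g$, being non-trivial and $t_f$-reduced, forces $t_f$ to be the critical point of some letter in one of the factors, combined with the fact that the construction of Proposition~\ref{prop} enforces the good-pair condition simultaneously at every critical point coming from $G$ or from $H$, effectively putting them on equal footing.
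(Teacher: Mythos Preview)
There is a genuine gap at precisely the point you flag. The merging hypothesis is formulated only for points $x\in T_G\cup T_H$ (the paper's ``$T_G\cap T_H$'' is a typo), and in general $t_f$ does \emph{not} lie in $T_G\cup T_H$; it lies only in the $F$-orbit of such a point. Concretely, take $f=f_1^{-1}f_2f_1$ with $f_1\in G$, $f_2\in H$, $t_{f_1}>t_{f_2}$, and suppose $f_1$ decreases on $(-\infty,t_{f_1})$. Then $f$ fixes every $x$ with $f_1(x)>t_{f_2}$, so
\[
t_f=f_1^{-1}(t_{f_2}),
\]
which is typically not a critical point of any element of $G$ or $H$. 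Moreover the $t_f$-reduction of $f$ is the full word $f_1^{-1}f_2f_1$: no letter gets deleted, since $t_{f_1}>t_f$ and $t_{f_2}=f_1(t_f)$. Thus your limit word $g$ is a three-letter word fixing $t_f$, and no form of the merging hypothesis forces it to be a single letter. Your suggested fix (that $g$ being non-trivial and $t_f$-reduced ``forces $t_f$ to be the critical point of some letter'') is exactly what fails in this example.

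This is why the paper's argument is organised differently. Rather than trying to reduce $f$ near $t_f$ to a single letter, the paper locates in the reduced word $f_x$ a \emph{unique} letter $\widehat{f}_i$ whose critical point is hit by the prefix, $t_{\widehat{f}_i}=\widehat{f}_{i-1}\cdots\widehat{f}_1(x)$; merging is used to establish uniqueness (two such letters would put distinct critical points in the same $F$-orbit). One then writes $f_x=w_1\widehat{f}_iw_0$ and checks, via the piecewise-constant structure of $y\mapsto f_y$, that $w=w_1w_0$ already satisfies $w_y=1$ on a neighbourhood of $x$, so $f$ acts near $t_f$ as the \emph{conjugate} $\bigl(\widehat{f}_i\bigr)^{w_1}$ of a single letter rather than as a single letter itself. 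Condition~\eqref{dynbi} then transfers from $\widehat{f}_i$ to $f$. A secondary issue in your write-up is the claim that the left-limit word $g$ is $t_f$-reduced: the reduction at $t_f$ can strictly shorten $g$ (e.g.\ when $t_{f_1}<t_{f_2}$ and $f_2$ sends points just below $t_{f_2}$ below $t_{f_1}$), so even when $t_f\in T_G\cup T_H$ you should apply merging to $f_{t_f}$, not to $g$.
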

	
	\begin{proof}
		First, note that the sets $T_G$ and $T_H$ are disjoint. Indeed, if $t_g=t_h=t$ for some $g\in G$, $h\in H$, then the pair $(f=hgh,t)$ violates the condition of merging as $f_t=f$ and $f(t)=t$.
		Furthermore, the orbits of $T_G$ and $T_H$ are disjoint. Indeed, let $f_1(t_g)=f_2(t_h)$ for some $f_1,f_2\in F$, $g\in G$ and $h\in H$. Then $hf_2^{-1}f_1(t_g)=h(t_h)=t_h$ and $f_1^{-1}f_2hf_2^{-1}f_1(t_g)=t_g$ but the middle letter cannot be deleted during the $t_h$ reduction process of the word $f_1^{-1}f_2hf_2^{-1}f_1=h^{f_1^{-1}f_2}$.
		Similarly, if the orbits of $t_{g_1}$ and $t_{g_2}$ are disjoint under the action of $G$, they remain disjoint under the action of $F$.
		
		Let $f=f_n \dots f_2 f_1\neq 1$ be given. Let $f^{\prime}=f_{k_1}\dots f_{k_i}\neq 1$ be a word obtained from $f$ by deleting some of its letters so that all the letters $f_{k_1},\dots, f_{k_i}$ belong to the same group. We denote $t^{\prime}=(f_{k_i-1}\dots f_1)^{-1} (t_{f^{\prime}})$. The element $f^{\prime}$ may appear as a single letter during the $x$-reduction process of the word $f$. Thus, the form of the $x$-reduced word $f_x$ may depend on whether $x\leq t^{\prime}$ or $x< t^{\prime}$.
		Let $t_1>\dots>t_m$ be all points of the form $t^{\prime}$ written in descending order. Then the word $f_x$ is constant for $x\in(t_{k+1},t_k]$, $1\leq k<m$. Also, $f_x=f$ for $x\leq t_m$ and $f_x=1$ for $x>t_1$.
		
		Let $x$ be the largest number with $f_x\neq 1$. In particular, $x=t_r$ is the largest among $t_1,\dots, t_m$ with this property. We write 
		\[
		f_x=\widehat{f}_{l}\widehat{f}_{l-1}\dots \widehat{f}_2 \widehat{f}_1
		\]
		in the normal form.
		
		Since $f_y\neq f_x$ for $y>x$ there is a letter $\widehat{f}_i$ with $t_{\hat{f}_i}=\widehat{f}_{i-1}\dots \widehat{f}_2 \widehat{f}_1(x)$. If there is another such letter $\widehat{f}_j$ then 
		\[
		t_{\widehat{f}_j}=\widehat{f}_{j-1}\dots \widehat{f}_{i+1}(t_{\widehat{f}_i})
		\]
		assuming that $j>i$. But this is impossible in the reduced word. So $\widehat{f}_i$ is unique, and we write 
		\[
		f_x=w_1 \widehat{f}_i w_0=w_1 \widehat{f}_i w_1^{-1} w_1 w_0=\left(\widehat{f}_i\right)^{w_1} w,
		\]
		where $w_0=\widehat{f}_{i-1}\dots \widehat{f}_1$, $w_1=\widehat{f}_{l}\dots \widehat{f}_{i+1}$ and $w=w_1w_0$.
		
		Since $\widehat{f}_i$ is the only letter that satisfies $t_{\hat{f}_i}=\widehat{f}_{i-1}\dots \widehat{f}_2 \widehat{f}_1(x)$, $t_r$ does not appear as $t^{\prime}$ for $w$ and $w_y$ is constant on $(t_{r+1},t_{r-1}]$. But $w_y=f_y=1$ for $y>x=t_r$. So $w_y=1$ for some $y<x$ and $w$ acts trivially in the neighborhood of $x$. Then, the action of $f$ near $x$ is equivalent to the action of $\left(\widehat{f}_i\right)^{w_1}$ and we have $f>1$ if $\widehat{f}_i>1$. The condition \eqref{dynbi} now follows from the fact that it holds for $\widehat{f}_i$.
		
	\end{proof}

	\begin{remark}
		Theorem \ref{merging} gives a new proof for Vinogradov's theorem or bi-orderability of free products of bi-ordered groups.
	\end{remark}

	\section{Free Products}

	In this section, we show that no ordering on $F=G\bigast H$ is isolated.
	
	That means, for any order $<$ on $F$ and any finite collection of positive elements 
	\begin{equation*}
		1<f_1<\dots<f_n,
	\end{equation*}we need to construct an order $\prec$, different from $<$ that still satisfy 
	\begin{equation}\label{need}
		1\prec f_1, \dots, f_n.
	\end{equation}

	Our strategy is similar to the one used in \cite{Rivas} to show that no left ordering on the free product of left ordered groups is isolated. We are going to conjugate one of the factors of $F=G\bigast H$ by some $\tau\in\h$. We will perform this conjugation in the merging of the dynamical realizations of $F$ and $\Z$.

	Let $C_{f_1}$ be the largest convex subgroup (with respect to the order $<$) of $F$ that does not contain $f_1$. Then, since $1<f_1<f_2<\dots<f_n$, $C_{f_1}$ does not contain any of $f_i$'s. Consider $\Gamma\coloneqq \bigcap_{f\in F}\left(C_{f_1}\right)^f$. Clearly, $\Gamma$ is convex and normal in the $F$ group, which does not contain any of the $f_i$'s. If $\Gamma\neq\{1\}$ we reverse the order on it. To be precise, the new order $\prec$ is defined by
	\begin{equation*}
		f\succ 1 \iff f>1 \text{ and } f\notin \Gamma \text{ or }  f<1 \text{ and } f\in \Gamma.
	\end{equation*}
	It is straightforward to check that $\prec$ is a bi-ordering of $F$ that satisfies \eqref{need}. Also, $\prec$ is different from $<$ under the assumption that $\Gamma$ is non-trivial.
	
	Consider now the case $\Gamma=\{1\}$.
	
	This implies that for any $f_0\in F\setminus\{1\}$ there is $f\in F$ such that 
	\begin{equation}\label{small}
		f_1^f\ll f_0.
	\end{equation}

	Let $f_0$ be the smallest of the elements of the form $f_1^u$, where $u$ is a subword of some $f_i$.
	Then, by \eqref{small}, there exists $f^{\prime}\in F$ such that
	\begin{equation*}
		f_1^{f^{\prime}}\ll f_0,
	\end{equation*}
	and, because the dynamical realization of $\widetilde{F}$ satisfies \eqref{dynnol} we have
	\begin{equation*}
		f^{\prime}(t_{f_1})=t_{f_1^{f^{\prime}}}<t_{f_0}.
	\end{equation*}
	To simplify the notation, we denote $t_1=t_{f_1}$ and $t_0=t_{f_0}$.

	We write $f^{\prime}=f_m^{\prime}\dots f^{\prime}_1$ in the normal form. Let $t_1^{\prime}=f^{\prime}_1(t_1)$,  $t_2^{\prime}=f^{\prime}_2(t_1^{\prime})$, \dots, $t_m^{\prime}=f^{\prime}_m(t_{m-1}^{\prime})=f^{\prime}(t_1)$. We may assume that $t_1>t_1^{\prime}>t_2^{\prime}>\dots>t_{m}^{\prime}$. Otherwise, we delete the elements $f^{\prime}_i$ in $f$ that do not decrease the correspondent $t^{\prime}_i$. In addition, we assume that $t_m^{\prime}$ is the first one that is smaller than $t_0$, so that $t_{m-1}^{\prime}\geq t_0>t_m^{\prime}$. Otherwise, we remove the tail of $f^{\prime}$.
	Let also $t_{m+k}^{\prime}=\left(f_m^{\prime}\right)^k(t_m^{\prime})$, $k\geq 1$.

	Next, we construct a dynamical realization $\rho_{\Z}$ of $\Z=\langle \tau \rangle$.

	Without loss of generality, let $f^{\prime}_m\in H$. Let $g\in G$ be any non-trivial element, and let $t^{\prime\prime}<t_m^{\prime}$ be such that 
	\[
	t^{\prime\prime\prime}=g(t^{\prime\prime})< t^{\prime\prime}.
	\]
	If such a point does not exist, we replace $g$ with $g^{-1}$.
	
	Pick any $t^{\prime}\in \R$ such that $t_m^{\prime}<t^{\prime}<t_0$. 
	We let $\tau_1$ act on $\R$ such that
	\begin{enumerate}
		\item $\tau_1$ acts trivially on $[t^{\prime},\infty)$;
		\item $\tau_1(t^{\prime\prime})=t_m^{\prime}$;
		\item $\tau_1(t^{\prime\prime\prime})=(t_{m+1}^{\prime})$.
	\end{enumerate}
	This is possible because
	\begin{equation*}
		t^{\prime\prime\prime}<t^{\prime\prime}<t^{\prime}
	\end{equation*}
	and
	\begin{equation*}
		t_{m+1}^{\prime}<t_m^{\prime}<t^{\prime}.
	\end{equation*}
	
	For example, we may set
	\begin{equation*}
		\tau_1(x)=\begin{cases}
			x, & \quad t^{\prime}<x\\
			\frac{t^{\prime}-t_m^{\prime}}{t^{\prime}-t^{\prime\prime}}(x-t^{\prime})+t^{\prime}, & \quad    t^{\prime\prime}< x \leq t^{\prime}\\
			\frac{t_m^{\prime}-t_{m+1}^{\prime}}{t^{\prime\prime}-t^{\prime\prime\prime}}(x-t^{\prime\prime})+t^{\prime}_m  , & \quad   x \leq t^{\prime\prime}\\
		\end{cases}
	\end{equation*}
	
	Note that
	\begin{equation}\label{tau1}
		\tau_1 g \tau_1^{-1} (t_m^{\prime})=\tau_1 g (t^{\prime\prime})=\tau_1(t^{\prime\prime\prime})=t^{\prime}_{m+1}>t^{\prime}_{m+2}=f_{m}^{\prime}(t^{\prime}_{m+1}).
	\end{equation}
	
	The dynamical realization of $\Z$ generated by $\tau_1$ does not merge with the dynamical realization $\rho_F$ of $F$. However, by Proposition \ref{prop}, there is a homeomorphism $\phi_1$ such that the dynamical realization $\rho_1$ of $\Z$ generated by $\tau_1^{\prime}=\tau_1^{\phi_1}$ does merge with $\rho_F$. We choose $\phi_1$ to be sufficiently small so that $\tau^{\prime}_1 g \left(\tau^{\prime}_1\right)^{-1}>t^{\prime}_{m+2}$, and $t_{\tau^{\prime}_1}<t_0$. This is possible because $\tau_1$ satisfies the inequality $\eqref{tau1}$ and $t_{\tau_1}=t^{\prime}<t_0$.

	Now, we can consider the merging of the dynamical realizations $\rho_F$ and $\rho_1$. It induces the ordering of $F\bigast\Z$. For $f=g_1h_1\dots g_kh_k\in F$, where $g_i\in G$, $h_i\in H$ denote 
	\begin{equation*}
		(f)_{1}\coloneqq g_1^{\tau_1^{\prime}}h_1\dots g^{\tau_1^{\prime}}_kh_k.
	\end{equation*}
	Let the order $\prec_1$ be given by $f\succ_1 1$ whenever $(f)_{1}>1$.
	
	Let $f_i=g^{(i)}_1h_1^{(i)}\dots g^{(i)}_{k_i}h_{k_i}^{(i)}$ and consider
	\begin{align*}
		(f_i)_{\tau_1^{\prime}}&=\left(g^{(i)}_1\right)^{\tau_1^{\prime}}h_1^{(i)}\dots \left(g^{(i)}_{k_i}\right)^{\tau_1^{\prime}}h_{k_i}^{(i)} \\
		&=\tau_1^{\prime} g^{(i)}_1 \left(\tau_1^{\prime}\right)^{-1} h_1^{(i)}\dots \tau_1^{\prime} g^{(i)}_{k_i} \left(\tau_1^{\prime}\right)^{-1} h_{k_i}^{(i)}\\
		&=f_i   \left(\tau_1^{\prime}\right)^{\left(g_1^{(i)}h_1^{(i)}\dots g_{k_i}^{(i)}h_{k_i}^{(i)}\right)^{-1}} \left(\left(\tau_1^{\prime}\right)^{-1}\right)^{\left(g_1^{(i)}h_1^{(i)}\dots g_{k_i}^{(i)}h_{k_i}^{(i)}\right)^{-1}} \dots \left(\left(\tau_1^{\prime}\right)^{-1}\right)^{\left(h_{k_i}^{(i)}\right)^{-1}}   \\
		&=f_i \prod \left(\left(\tau_1^{\prime}\right)^{\pm 1}\right)^{u^{-1}},
	\end{align*}
	where $u$ runs through some subwords of $f_i$.

	Recall that $t_{\tau_1^{\prime}}<t_0=f_{t_0}$, and, therefore $\tau_1^{\prime}\ll f_0$. Thus, since $f_0$ is defined as the smallest of $f_1^u$'s,
	\begin{equation*}
		\left(\tau_1^{\prime}\right)^{u^{-1}}\ll (f_0)^{u^{-1}}\leq \left(f_1^{u}\right)^{u^{-1}}=f_1\leq f_i.
	\end{equation*}
	Therefore, $(f_i)_{1}>1$ and $f_i\succ_1 1$. So, \eqref{need} is satisfied.
	
	For the same reason, $t_{(f_i)_1}=t_{f_i}$. In particular, $t_{(f_1)_1}=t_{f_1}=f_1$.
	Then, $(f^{\prime}_1)_1(t_1)=f^{\prime}_1(t_1)=t_1^{\prime}$,  $(f^{\prime}_2)_1(t^{\prime}_1)=f^{\prime}_1(t^{\prime}_1)=t_2^{\prime}$, $\dots$, $(f^{\prime}_{m-1})_1(t^{\prime}_{m-2})=f^{\prime}_{m-1}(t^{\prime}_{m-2})=t_{m-1}^{\prime}$ because $\tau$ acts trivially on $t^{\prime}_i$'s, $i=1,\dots, m-1$ as $t_i>t_1$. Also, $(f^{\prime}_m)_1(t^{\prime}_{m-1})=f^{\prime}_m(t_{m-1}^{\prime})=t_m^{\prime}$ since $f_m^{\prime}\in H$ is not affected by the conjugation by $\tau$. Therefore,
	\begin{equation*}
		t_{\left(f_1^{f^{\prime}}\right)_1}=t_{f_1^{f^{\prime}}}=t_m^{\prime}
	\end{equation*}
	and
	\begin{equation*}
		t_{\left(f_1^{gf^{\prime}}\right)_1}=(g)_1(t_m^{\prime})=g^{\tau^{\prime}_1}(t_m^{\prime})>t_{m+2}^{\prime}=t_{f_1^{f_m^{\prime}f^{\prime}}}
	\end{equation*}
	so
	\begin{equation*}
		f_1^{gf^{\prime}}\succ_1 f_1^{f_m^{\prime}f^{\prime}}
	\end{equation*}

	Similarly, we let $\tau_2$ act on $\R$ such that
	\begin{enumerate}
		\item $\tau_2$ acts trivially on $[t^{\prime},\infty)$;
		\item $\tau_2(t^{\prime\prime})=t_m^{\prime}$;
		\item $\tau_2(t^{\prime\prime\prime})=(t_{m+3}^{\prime})$
	\end{enumerate}
	so that
	\begin{equation}\label{tau2}
		\tau_2 g \tau_2^{-1} (t_m^{\prime})=\tau_2 g (t^{\prime\prime})=\tau_2(t^{\prime\prime\prime})=t^{\prime}_{m+3}<t^{\prime}_{m+2}=f_{m}^{\prime}(t^{\prime}_{m+1}).
	\end{equation}
	
	We choose the adjustment $\tau_2^{\prime}$ of $\tau_2$ that merges with $\rho_F$ to satisfy the inequality \eqref{tau2} and $t_{\tau^{\prime}_2}<t_0$. Then we construct the order $\prec_2$ analogously to the order $\prec_1$. Again, we have $f_i\succ_2 1$, $1\leq i\leq n$. But now
	\begin{equation*}
		t_{\left(f_1^{gf^{\prime}}\right)_2}=(g)_1(t_m^{\prime})=g^{\tau^{\prime}_2}(t_m^{\prime})<t_{m+2}^{\prime}=t_{f_1^{f_m^{\prime}f^{\prime}}}
	\end{equation*}
	and
	\begin{equation*}
		f_1^{gf^{\prime}}\prec_2 f_1^{f_m^{\prime}f^{\prime}}.
	\end{equation*}
	
	Therefore, the orders $\prec_1$ and $\prec_2$ are different, and at least one of them differs from the initial order $<$. We choose $\prec$ to be this order to conclude the proof.

	\section{Absence of dense orbits}

	The following proposition follows from the work of Conrad \cite{Conrad}, due to the fact that all bi-orders are Conradian.
	
	\begin{proposition}
		For every convex jump $C<D$ in a bi-ordered group $G$ there is homomorphism $\phi: D\to \R$ such that $\ker \phi=C$ and $\phi(g)>0$ for all positive $g\in D\setminus C$. This homomorphism is defined uniquely up to a positive multiple.
	\end{proposition}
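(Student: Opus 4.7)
The plan is to reduce the statement to the classical H\"older archimedean embedding theorem by passing to the quotient $D/C$. First, I would verify that $C$ is normal in $D$: since every bi-order is Conradian, the theory of convex jumps in Conradian orders guarantees $C \triangleleft D$. Thus the quotient $D/C$ is a well-defined group, and it inherits a bi-order by declaring the coset $gC$ positive whenever $g>1$ lies in $D\setminus C$. This is well defined precisely because the cosets of $C$ inside $D$ are convex with respect to the original order, and bi-invariance descends to the quotient because $C$ is normal in $D$.

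Next, I would translate the convex-jump hypothesis into the absence of nontrivial proper convex subgroups in $D/C$: the preimage in $D$ of any convex subgroup of $D/C$ would be a convex subgroup of $D$ sandwiched between $C$ and $D$, hence equal to one of them by the jump condition. A bi-ordered group with no nontrivial proper convex subgroups is archimedean (given $1<a<b$, the convex subgroup generated by $a$ must exhaust $D/C$, so $b<a^n$ for some $n$), and H\"older's theorem then provides an order-preserving embedding $\overline{\phi}: D/C \to (\R,+)$, unique up to a positive scalar. Composing with the quotient map $D \to D/C$ produces the desired $\phi$, with $\ker\phi = C$ and $\phi$ positive on all positive elements of $D\setminus C$.

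For uniqueness up to a positive multiple, I would note that any valid $\phi$ must vanish on $C$ and be positive on positives of $D\setminus C$, so it factors through $D/C$ as an order-preserving embedding into $\R$, and the uniqueness clause of H\"older's theorem finishes the argument. The main obstacle in principle is the archimedean embedding step itself, but this is precisely the classical content of Conrad's analysis of Conradian orders \cite{Conrad} and requires no independent argument here; all remaining verifications (normality of $C$ in $D$, well-definedness of the quotient bi-order, and the translation of the jump condition into the archimedean property) are straightforward consequences of convexity.
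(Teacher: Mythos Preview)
The paper does not actually supply a proof of this proposition: it states the result and attributes it to Conrad \cite{Conrad}, remarking only that every bi-order is Conradian. Your proposal is correct and is exactly the standard argument that underlies Conrad's theorem---pass to the quotient $D/C$, check that the jump hypothesis forces the induced bi-order on $D/C$ to be archimedean, invoke H\"older, and read off uniqueness from the uniqueness clause of H\"older's embedding. There is nothing to compare beyond noting that you have written out what the paper leaves as a citation.
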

	
	The homomorphism $\phi$ above is known as the \emph{Conrad homomorphism} associated with the jump $C<D$.
	
	The normalizer group $N=N_C=N_D$ acts on $D$ by conjugations. The conjugation by $g\in N$ combined with the Conradian homomorphism $\phi$ results in an automorphism of $\phi (D)<\R$, given by $\phi(h)\mapsto \phi(h^g)$. This automorphism preserves the natural order on $\phi(D)$, and therefore is a multiplication by some $\alpha_g>0$.

	Let the order $<$ on $F_2$ be given and assume that there is a pair of positive generators $a,b$ such that $C_a=D_b$ and the conjugation by $a$ on $D_b$ corresponds to the multiplication by $\alpha>0$ in $\R$. If such a pair of generators exists, we say that the order is of type $\alpha$.
	
	The type is not determined for all orders. For example, every order generated by merging dynamical realizations of $\langle a\rangle$ and $\langle b \rangle$ is typeless as in it $\left(D_b\right)^a\neq D_b$ (assuming $a\gg b$). However, if the order has a type, it is determined uniquely.

	\begin{lemma}
		Let the order $<$ be of type $\alpha$ for some $\alpha>0$. Then it is not of the type $\beta\neq \alpha$.
	\end{lemma}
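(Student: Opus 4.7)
The plan is to assume the same bi-order $<$ realizes both type $\alpha$, via the pair $(a,b)$, and type $\beta$, via some other free generating pair $(a',b')$, and to deduce $\alpha=\beta$. The key input will be that bi-orderability forces a very rigid convex-subgroup structure on $F_2$, so that both witnessing pairs must see the same convex jump.

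First I will observe that every convex subgroup of the bi-ordered group $F_2$ is normal. Conjugation in a bi-ordered group preserves the order, so $gCg^{-1}$ is convex whenever $C$ is; and since convex subgroups of any totally ordered group form a chain, comparing $gCg^{-1}$ with $C$ (and then $g^{-1}Cg$ with $C$) forces $gCg^{-1}=C$. Applied to $D_b$, this makes $F_2/D_b$ a bi-ordered, hence torsion-free, quotient. Because $a$ and $b$ generate $F_2$ and the image of $b$ is trivial, $F_2/D_b$ is cyclic; being torsion-free and nontrivial (as $a\notin D_b$), it must be $F_2/D_b\cong\Z$, generated by the image of $a$.

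Next I will establish $D_{b'}=D_b$. Convex subgroups form a chain, so by symmetry I may assume $D_{b'}\subseteq D_b$. If the inclusion were strict, then $D_{a'}$, being the immediate successor of $D_{b'}=C_{a'}$ in the chain of convex subgroups, would satisfy $D_{a'}\subseteq D_b$, and hence $a'\in D_b$. Combined with $b'\in D_{b'}\subseteq D_b$, this would give $F_2=\langle a',b'\rangle\subseteq D_b$, contradicting $a\notin D_b$.

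Finally, with $D_b=D_{b'}$ in hand, the quotient $F_2/D_b\cong\Z$ together with positivity will pin down $a'$ modulo $D_b$. Since $b'\in D_b$, the pair $(a',b')$ can generate $F_2$ only when $a'\equiv a^{\pm 1}\pmod{D_b}$. Positivity rules out the $-1$ case: convexity of $D_b$ and $a\notin D_b$ force $a>d$ for every $d\in D_b$, so any element of the form $a^{-1}d$ with $d\in D_b$ is strictly less than $1$. Therefore $a'=ad$ for some $d\in D_b$, and since $D_b/C_b$ embeds in $\R$ and is abelian, $d$ acts trivially on $D_b/C_b$ by conjugation; the multiplier on $D_b/C_b$ induced by $a'$ then equals the multiplier induced by $a$, giving $\beta=\alpha$. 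I expect the rigidity argument forcing $D_{b'}=D_b$ to be the main obstacle, since it is precisely what prevents the two witnessing pairs from seeing unrelated convex jumps and hence unrelated multipliers.
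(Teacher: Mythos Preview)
Your argument is correct and follows essentially the same route as the paper: both show that the witnessing pairs must determine the \emph{same} second-largest convex subgroup $D_b=D_{b'}$, use $F_2/D_b\cong\Z$ together with positivity to get $a'\equiv a\pmod{D_b}$, and then observe that the extra factor $d\in D_b$ acts trivially on the abelian quotient $D_b/C_b$, so the two multipliers coincide. Your treatment is in fact a bit more self-contained than the paper's (you justify normality of convex subgroups and give a clean contradiction argument for $D_{b'}=D_b$, where the paper simply asserts that $C_c=C_a$ and $C_d=C_b$).

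One small point you leave implicit: by definition $\beta$ is the multiplier of $a'$ on $D_{b'}/C_{b'}$, whereas you compute the multiplier of $a'$ on $D_b/C_b$. This is fine, but it deserves one sentence: since $C_x$ is always the \emph{immediate} convex predecessor of $D_x$ (any convex subgroup strictly inside $D_x$ misses $x$ and hence lies in $C_x$), the equality $D_{b'}=D_b$ forces $C_{b'}=C_b$, so the two quotients---and hence the two multipliers---are literally the same.
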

	\begin{proof}
		Since the order is of type $\alpha$ there are generators $a,b>1$ of $F_2$ such that $C_a=D_b$, and $a$ acts on $\faktor{D_b}{C_b}$ as multiplication by $\alpha$.
		
		Under these conditions, $D_a=F_2$ is the largest convex subgroup of $F_2$, and $C_a=D_b$ is the second largest convex subgroup of $F_2$.
		We define the homomorphisms $\phi:F_2\to \R$ and $\psi:C_a\to \R$ by $\phi(a)=1$, $\phi(b)=0$, and $\psi(b^{a^n})=\alpha^n$ so that $\ker \phi=C_a$ and $\ker \psi= C_b$.
		
		Clearly, $\phi$ is the Conrad homomorphism for the jump $C_a<D_a$. Additionally, the homomorphism $\psi$ satisfies $\psi(g^a)=\alpha \psi(g)$, therefore, it is the Conrad homomorphism for the jump $C_b<D_b$.
		
		Let $c,d>1$ be any other pair of generators of $F_2$ satisfying $C_c=D_d$.
		Then $D_c=F_2$ is the largest convex subgroup of $F_2$ and $C_c=D_d$ is the second largest convex subgroup of $F_2$. Therefore, $D_c=D_a$, $C_c=C_a$, and $C_d=C_b$. Since $d\in D_d=C_a$, we have $\phi(d)=0$. So $\phi(F_2)=\Z$ is generated by $\phi(c)$, which means that $\phi(c)=\pm 1$. We rule out $\phi(c)=-1$ because $c>1$, so $\phi(c)=1=\phi(a)$. We write
		\[
		c=(ca^{-1})a
		\]
		and observe that $ca^{-1}\in \ker\phi=C_a$. Then, for any $g\in C_a$, we have
		\[
		\psi(g^c)=\psi(g^{(ca^{-1})a})=\psi(ca^{-1})+\psi(g^a)-\psi(ca^{-1})=\alpha\psi(g).
		\]
		
		In other words, conjugation by $c$ corresponds to multiplication by $\alpha$, and the order type does not depend on the choice of generators of $F_2$. 
	\end{proof}

	\begin{proof}[Proof of Theorem \ref{verynodense}]
		Let $<_{\alpha}$ be an order of type $\alpha>1$. Let $k,l,m,n$ be positive integers such that
		\[
		1<\frac{k}{l}<\alpha<\frac{m}{n}
		\]
		Then the following inequalities are satisfied:
		\begin{itemize}
			\item $1 <_{\alpha} b <_{\alpha} a$;
			\item $\left(b^a\right)^n <_{\alpha} b^m$;
			\item $b^l <_{\alpha} \left(b^a\right)^k$.
		\end{itemize}

		Let $V_{k,l,m,n}\subset \BO(F_2)$ be the set of all orders satisfying these inequalities and let
		\[
		U_{k,l,m,n}=\bigcup_{\sigma\in \Aut_{F_2}} \sigma(V_{k,l,m,n}).
		\]
		Then the sets $V_{k,l,m,n}$ are open as elements of the base of the topology on $\Aut_{F_2}$, and the sets $U_{k,l,m,n}$ are open as unions of open sets. Also, note that the sets $U_{k,l,m,n}$ are constructed to be invariant under the action of $\Aut_{F_2}$
		
		Furthermore, every order in $V_{k,l,m,n}$ (hence, in $U_{k,l,m,n}$) is of type $\beta$ for some $\beta\in [k/l,m/n]$.
		So, whenever $1<{k_1}/{l_1}<{m_1}/{n_1}<{k_2}/{l_2}<{m_2}/{n_2}$, the sets $U_{k_1,l_1,m_1,n_1}$ and $U_{k_2,l_2,m_2,n_2}$ are disjoint.
		
		Let $<_{\alpha}$ and $<_{\beta}$ be orders of types $\alpha$ and $\beta$, respectively, and assume that $1<\alpha<\beta$. Choose numbers $k_1,l_1,m_1,n_1,k_2,l_2,m_2,n_2\in \N$ such that
		\[
		1<\frac{k_1}{l_1}<\alpha<\frac{m_1}{n_1}<\frac{k_2}{l_2}<\beta<\frac{m_2}{n_2}.
		\]
		
		Then $<_{\alpha}\in U_{k_1,l_1,m_1,n_1}$, $<_\beta\in U_{k_2,l_2,m_2,n_2}$, and $U_{k_1,l_1,m_1,n_1}\cap U_{k_2,l_2,m_2,n_2}=\emptyset$. 
		
		Let $\prec$ be any order on $F_2$. Then $\prec\notin U_{k_1,l_1,m_1,n_1}$ or $\prec\notin U_{k_2,l_2,m_2,n_2}$. If $\prec\notin U_{k_1,l_1,m_1,n_1}$, then, since $U_{k_1,l_1,m_1,n_1}$ is invariant, $\Aut_{F_2}(\prec)\cap _{k_1,l_1,m_1,n_1}=\emptyset$; and, since $U_{k_1,l_1,m_1,n_1}$ is open, $\overline{\Aut_{F_2}(\prec)}\cap U_{k_1,l_1,m_1,n_1}=\emptyset$. In this case, $<_{\alpha}\notin \overline{\Aut_{F_2}(\prec)}$. Similarly, if $\prec\notin U_{k_2,l_2,m_2,n_2}$, then $<_{\beta}\notin \overline{\Aut_{F_2}(\prec)}$. Therefore, the set $\overline{\Aut_{F_2}(\prec)}$ may contain orders of at most one type. It remains to show that for every $\alpha>1$ there is an order of that type.
		
		To construct an order of type $\alpha>1$ we begin with choosing an arbitrary order $<\in \BO(F_2)$ and defining the homeomorphisms $\phi:F_2\to \R$, ${\psi:\langle \left(b^a\right)^n\mid n\in \Z \rangle\to \R}$ as above. Next, we define the order $<_{\alpha}$ by declaring $g >_{\alpha}$ if
		\begin{itemize}
			\item $\phi(g)>0$, or
			\item $g\in \ker \phi$ and $\psi(g)>0$, or
			\item $g\in \ker \psi$ and $g>1$.
		\end{itemize}
		It is easy to see then that $>_{\alpha}$ is an order of type $\alpha$.

	\end{proof}

\end{document}